\newtheorem{theorem}{Theorem}[section]
\newtheorem{lemma}[theorem]{Lemma}
\newtheorem{proposition}[theorem]{Proposition}
\newtheorem{corollary}[theorem]{Corollary}
\theoremstyle{definition}
\newtheorem{definition}[theorem]{Definition}
\theoremstyle{remark}
\newtheorem{remark}[theorem]{Remark}
\newtheorem{example}[theorem]{Example}
\title{%
\textbf{Tunnel Geometry and Proliferation Logic}\\[4pt]
\large A Categorical Equivalence Arising from Static-World Metaphysics}
\author{Dmytro Sukhov\\
Ghent University}
\date{12.12.2025}
\begin{document}
\maketitle
\begin{abstract}
We prove a strict categorical equivalence between \emph{Tunnel
Geometry} and \emph{Proliferation Logic}. Both frameworks arise
from a static-world metaphysics in which becoming is primitive and
locality is emergent.
Tunnel Geometry provides a point-free intensional geometry based
on interference, frames of opens, and Lawvere metrics. Proliferation
Logic provides a generative semantics based on distinctions, costs,
scenes, and foci.
Once expressed as frame--ultrafilter--Lawvere triples $(\Omega,X,d)$,
the two theories become \emph{identical}. Using Stone duality and
metric enrichment, we define functors $F:\mathbf{TGeom}\to\mathbf{PLog}$
and $G:\mathbf{PLog}\to\mathbf{TGeom}$ and show that
$F$ and $G$ form a strict equivalence of categories.
A spectral equivalence theorem identifies the tunnel Laplacian with
the proliferative Laplacian via a unitary transformation.
An introductory section motivates both frameworks from a staticworld metaphysics and shows how Zeno's paradoxes are resolved
when motion is understood as structural interference rather than
traversal of points.
\end{abstract}
\tableofcontents
\newpage

\section{Introduction: Static Worlds and Zeno Revisited}
Classical metaphysics presumes that the world consists of points that
change in time. In the static-world ontology developed in
\emph{Static Metaphysics}~\cite{StaticMetaphysics}, the fundamental reality is a complete, atemporal, maximally articulated structure.
Key theses are:
\begin{enumerate}[label=\textbf{M\arabic*.}]
\item \textbf{Primacy of Potentiality:}
Absolute reality is an undifferentiated potentiality.
\item \textbf{Worlds as Static Structures:}
Each possible world is a timeless, fully realised configuration.
\item \textbf{Subjective Time:}
Temporal flow is a cognitive projection, not an ontological primitive.
\item \textbf{Embedded Cognition:}
Consciousness is structurally integrated into the world.
\end{enumerate}
\subsection{Zeno without paradox}
Zeno's paradoxes dissolve under this ontology. The arrow does not
“move through'' instants; the division into halves is descriptive, not
procedural; Achilles needs no infinite steps. Motion corresponds to
relations of structural interference, not traversal of points.
Tunnel Geometry and Proliferation Logic provide the mathematical
expression of this metaphysics: becoming is encoded by intensities,
interference, and generative distinctions; locality arises from ultrafilters.
\subsection{Outline and main results}
This paper establishes a strict categorical equivalence between two
independently motivated frameworks:
\begin{itemize}
\item \textbf{Tunnel Geometry} (Section~\ref{sec:tgeom}), a pointfree geometry based on interference of primitive structures called
tunnels;
\item \textbf{Proliferation Logic} (Section~\ref{sec:plog}), a generative
semantics based on distinctions and their compositional costs.
\end{itemize}
Our main results are:
\begin{enumerate}
\item Both theories are faithfully represented by frame--ultrafilter-Lawvere triples (Definitions~\ref{def:TFS} and~\ref{def:PFS});
\item The categories $\mathbf{TGeom}$ and $\mathbf{PLog}$ are
strictly equivalent (Theorem~\ref{thm:eq});
\item The spectral operators (Laplacians) are unitarily equivalent
(Theorem~\ref{thm:spectral}).
\end{enumerate}

\section{Tunnel Frame--Spaces}\label{sec:tgeom}
\subsection{Lawvere metrics and frames}
\begin{definition}[Lawvere metric]
A \emph{Lawvere metric} (in the symmetric case considered here)
on a set $X$ is a function $d : X \times X \to [0,\infty]$ satisfying:
\begin{enumerate}
\item $d(p,p)=0$,
\item $d(p,q)=d(q,p)$,
\item $d(p,r)\le d(p,q) + d(q,r)$,
\end{enumerate}
for all $p,q,r\in X$.
\end{definition}
\begin{definition}[Frame]
A \emph{frame} is a complete lattice $\Omega$ satisfying the infinite
distributive law:
\[
a \wedge \bigvee_{i \in I} b_i = \bigvee_{i \in I} (a \wedge b_i)
\]
for all $a \in \Omega$ and families $\{b_i\}_{i \in I} \subseteq
\Omega$.
\end{definition}
Frames provide the algebraic structure of open sets in point-free
topology.
The set of ultrafilters $\mathrm{Ult}(\Omega)$ on a frame
$\Omega$ inherits a natural topology (the Stone topology) whose
opens correspond to frame elements.
\subsection{Tunnel frame--spaces}
\begin{definition}[Tunnel frame--space]\label{def:TFS}
A \emph{tunnel frame--space} is a triple $(\Omega_X, X, d_X)$
where: \begin{enumerate}
\item $\Omega_X$ is a frame;
\item $X = \mathrm{Ult}(\Omega_X)$ equipped with the Stone
topology; \item $d_X$ is a Lawvere metric on $X$ such that for
every $p \in X$ and $\varepsilon > 0$, the metric ball
\[
B_\varepsilon(p) := \{q \in X \mid d_X(p,q) < \varepsilon\}
\]
is open in the Stone topology, and conversely, every basic Stone open
contains a metric ball around each of its points.
\end{enumerate}
\end{definition}
\begin{remark}
The third condition ensures that the metric topology and Stone
topology coincide. For tunnel geometries constructed via interference of primitive structures, this is guaranteed by Theorem~4.2
in~\cite{TunnelGeomFrame}.
\end{remark}
\subsection{From tunnels to frames}
Let $\mathcal{I}$ be a set of primitive structures called
\emph{tunnels}. Each tunnel $T \in \mathcal{I}$ is equipped
with an \emph{intensity} $\Lambda(T) \in [0,\infty]$ and an
\emph{interference operation} $\bowtie : \mathcal{I} \times
\mathcal{I} \rightharpoonup \mathcal{I}$ defined when common
substructures exist.
\begin{proposition}[Frame generation]\label{prop:frame-gen}
Let $\mathcal{I}$ be a set of tunnels with interference operation
$\bowtie$
and intensity $\Lambda : \mathcal{I} \to [0,\infty]$. For each $T
\in \mathcal{I}$
and $\varepsilon > 0$, define the structural neighbourhood
\[
\mathcal{U}_\varepsilon(T) = \{T' \in \mathcal{I} \mid
\Lambda(T \bowtie T') < \varepsilon\}.
\]
The collection
\[
\mathcal{B} = \{\mathcal{U}_\varepsilon(T) \mid T \in \mathcal{I}, \varepsilon > 0\} \]
generates a frame $\Omega_X$ under arbitrary joins and finite
meets.
\end{proposition}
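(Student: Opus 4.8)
The plan is to realize $\Omega_X$ concretely as a sub-poset of the powerset lattice $\mathcal{P}(\mathcal{I})$ and to inherit the frame structure from there, rather than to verify the frame axioms from scratch. First I would recall that $\mathcal{P}(\mathcal{I})$, ordered by inclusion, is already a frame: joins are unions, meets are intersections, and the set-theoretic identity $A \cap \bigcup_{i} B_i = \bigcup_i (A \cap B_i)$ is precisely the infinite distributive law. Consequently it suffices to exhibit $\Omega_X$ as a subset of $\mathcal{P}(\mathcal{I})$ closed under arbitrary unions and finite intersections, since the distributive law then descends automatically to any such closed subcollection.

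Concretely, I would define $\Omega_X$ to be the collection of all subsets of $\mathcal{I}$ expressible as an arbitrary union of finite intersections of members of $\mathcal{B}$, with the usual conventions that the empty intersection equals $\mathcal{I}$ and the empty union equals $\emptyset$; these supply the top element $\mathcal{I}$ and the bottom element $\emptyset$. This is exactly the topology on $\mathcal{I}$ generated by $\mathcal{B}$ as a subbasis. The partiality of $\bowtie$ causes no difficulty here: $\mathcal{U}_\varepsilon(T)$ is well-defined as the set of those $T'$ for which $T \bowtie T'$ exists and satisfies $\Lambda(T \bowtie T') < \varepsilon$, so each member of $\mathcal{B}$ is simply a subset of $\mathcal{I}$, and the frame-generation argument never inspects the internal geometry of the tunnels.

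The two closure properties are the substance of the verification. Closure under arbitrary joins is immediate, since a union of sets each of which is itself a union of finite intersections from $\mathcal{B}$ is again such a union. Closure under finite meets reduces to the binary case, where I would invoke distributivity of intersection over union in $\mathcal{P}(\mathcal{I})$: writing two elements of $\Omega_X$ as $\bigcup_\alpha A_\alpha$ and $\bigcup_\beta B_\beta$ with each $A_\alpha, B_\beta$ a finite intersection of basic neighbourhoods, their intersection is $\bigcup_{\alpha,\beta}(A_\alpha \cap B_\beta)$, and each $A_\alpha \cap B_\beta$ is again a finite intersection of members of $\mathcal{B}$. Hence the meet lies in $\Omega_X$.

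With closure established, $\Omega_X$ is a complete lattice sitting inside $\mathcal{P}(\mathcal{I})$, and the infinite distributive law is inherited verbatim: for $U, V_i \in \Omega_X$ both $U \wedge \bigvee_i V_i$ and $\bigvee_i (U \wedge V_i)$ are computed as the corresponding set operations in $\mathcal{P}(\mathcal{I})$, where they agree, and both results lie in $\Omega_X$. This completes the identification of $\Omega_X$ as a frame. I expect the only point needing care to be bookkeeping rather than a genuine obstacle, namely confirming that the frame operations on $\Omega_X$ coincide with those of $\mathcal{P}(\mathcal{I})$: joins and finite meets do, by the closure argument above, whereas \emph{infinite} meets in $\Omega_X$ are the interiors of set intersections and may differ from the literal intersection. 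This discrepancy is harmless, since the frame axiom constrains only finite meets against arbitrary joins, and it is exactly finite meets that are computed set-theoretically.
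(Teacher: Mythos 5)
Your proof is correct, but it takes a genuinely different route from the paper's. The paper argues that $\mathcal{B}$ is \emph{itself} essentially closed under finite intersections — asserting that intersecting two structural neighbourhoods "corresponds to taking minimum intensities and monoidal composition of tunnels, yielding elements representable in $\mathcal{B}$" — and then obtains the frame by closing under joins alone, with the distributive law justified by a combination of set-theoretic distributivity and "stability of interference under monoidal operations." That argument leans on unstated properties of $\bowtie$ (in particular, that $\mathcal{U}_{\varepsilon_1}(T_1)\cap\mathcal{U}_{\varepsilon_2}(T_2)$ is again of the form $\mathcal{U}_\varepsilon(T)$), which do not follow from the hypotheses of the proposition as written. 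You sidestep this entirely by treating $\mathcal{B}$ as a mere subbasis: you take all unions of finite intersections, verify the two closure properties inside $\mathcal{P}(\mathcal{I})$, and inherit infinite distributivity verbatim. Your approach is more elementary and strictly more general — it works for an arbitrary collection of subsets of $\mathcal{I}$ and never inspects the interference operation — at the cost of not establishing the stronger (and possibly intended, but unproven) claim that $\mathcal{B}$ is a basis. Your closing remark that infinite meets in $\Omega_X$ may be interiors rather than literal intersections, and that this is harmless because the frame axiom only constrains finite meets against arbitrary joins, is a point of care the paper omits entirely.
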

\begin{proof}
Arbitrary unions of elements in $\mathcal{B}$ are well-defined as
suprema.
Finite intersections correspond to taking minimum intensities and
monoidal composition of tunnels, yielding elements representable in
$\mathcal{B}$.
The infinite distributive law follows from the fact that intersection
distributes over unions set-theoretically, combined with the stability
of interference under monoidal operations.
\end{proof}
\begin{remark}
For $T \in \mathcal{I}$, write $\iota(T)$ for the basic open generated by $T$, i.e.\ $\iota(T) = \bigcup_{\varepsilon>0} \mathcal{U}_\varepsilon(T)$.
For an ultrafilter $p \in X$, we use the shorthand $T \in p$ to mean
$\iota(T) \in p$.
\end{remark}
Points emerge as ultrafilters on $\Omega_X$. The Lawvere metric
is induced by the infimum of interference intensities:
\[
d_X(p,q) = \inf\bigl\{\varepsilon > 0 \;\bigm|\;
\exists T,T' \in \mathcal{I}:\ T \in p,\ T' \in q,\ \Lambda(T
\bowtie T') < \varepsilon\bigr\}.
\]
\subsection{Morphisms in $\mathbf{TGeom}$}
\begin{definition}[Morphisms in $\mathbf{TGeom}$]\label{def:tgeommorph}
A morphism $f : (\Omega_X, X, d_X) \to (\Omega_Y, Y, d_Y)$
in $\mathbf{TGeom}$ is a frame homomorphism
$f^\sharp : \Omega_Y \to \Omega_X$ such that the induced
ultrafilter map
\[
f_*(p) = \{V \in \Omega_Y \mid f^\sharp(V) \in p\}
\]
satisfies the non-expansiveness condition:
\[
d_Y(f_*(p), f_*(q)) \le d_X(p, q)
\]
for all $p, q \in X$.
\end{definition}
This defines the category $\mathbf{TGeom}$ of tunnel frame-spaces.

\section{Proliferative Frame--Spaces}\label{sec:plog}
\subsection{Proliferative bases}
\begin{definition}[Proliferative base]
A \emph{proliferative base} is a triple $(D, C, \cdot)$ consisting of:
\begin{enumerate}
\item A set $D$ of \emph{distinctions};
\item A \emph{cost function} $C : D \to [0,\infty]$;
\item A partial \emph{composition operation} $\cdot : D \times D
\rightharpoonup D$ satisfying associativity where defined.
\end{enumerate}
\end{definition}
Distinctions represent elementary acts of differentiation. Composition
$d \cdot e$ (when defined) represents the distinction obtained by
refining $d$ via $e$. Cost measures the intensity or complexity of a
distinction.
\subsection{Scenes and foci}
\begin{definition}[Scenes]
Given a proliferative base $(D, C, \cdot)$, define for each $d \in D$
and $\varepsilon > 0$ the \emph{neighbourhood}
\[
N_\varepsilon(d) = \{e \in D \mid C(d \cdot e) < \varepsilon\}.
\]
The \emph{scene frame} $\Sigma_Y$ is the smallest frame containing all $N_\varepsilon(d)$ for $d \in D$ and $\varepsilon >
0$.
\end{definition}
\begin{definition}[Foci]
The set of \emph{foci} is defined as
\[
\Phi_Y = \mathrm{Ult}(\Sigma_Y),
\]
the set of ultrafilters on $\Sigma_Y$, equipped with the Stone
topology.
\end{definition}
\begin{remark}
For $d \in D$ and $\phi \in \Phi_Y$, we write $d \in \phi$ as
shorthand for $N_\varepsilon(d) \in \phi$ for some $\varepsilon >
0$.
\end{remark}
The Lawvere metric on foci is defined by
\[
\rho_Y(\phi, \psi)
= \inf\bigl\{\varepsilon > 0 \;\bigm|\;
\exists d,e \in D:\ d \in \phi,\ e \in \psi,\ d \cdot e\ \text{defined},\
C(d \cdot e) < \varepsilon\bigr\}.
\]
\begin{definition}[Proliferative frame--space]\label{def:PFS}
A \emph{proliferative frame--space} is a triple $(\Sigma_Y,
\Phi_Y, \rho_Y)$ where $\Sigma_Y$ is a scene frame, $\Phi_Y =
\mathrm{Ult}(\Sigma_Y)$, and $\rho_Y$ is the induced Lawvere
metric whose topology coincides with the Stone topology.
\end{definition}
\subsection{Morphisms in $\mathbf{PLog}$}
\begin{definition}[Morphisms in $\mathbf{PLog}$]\label{def:plogmorph} A morphism $g : (\Sigma_Y, \Phi_Y, \rho_Y) \to
(\Sigma_Z, \Phi_Z, \rho_Z)$ in $\mathbf{PLog}$ is a frame
homomorphism
$g^\sharp : \Sigma_Z \to \Sigma_Y$ such that the induced map
$g_* : \Phi_Y \to \Phi_Z$ satisfies
\[
\rho_Z(g_*(\phi), g_*(\psi)) \le \rho_Y(\phi, \psi)
\]
for all $\phi, \psi \in \Phi_Y$.
\end{definition}

This defines the category $\mathbf{PLog}$ of proliferative frame-spaces.

\section{Equivalence of Categories}\label{sec:equiv}
We now establish the main result: $\mathbf{TGeom}$ and
$\mathbf{PLog}$ are strictly equivalent as categories.
\subsection{Construction of functors}
\begin{proposition}[Functor $F$]\label{prop:F}
There exists a functor $F : \mathbf{TGeom} \to \mathbf{PLog}$
defined as follows: \begin{itemize}
\item On objects: $F(\Omega_X, X, d_X) = (\Sigma_Y, \Phi_Y,
\rho_Y)$ where \[
\Sigma_Y := \Omega_X, \quad \Phi_Y := X, \quad \rho_Y :=
d_X,
\]
and the proliferative base is constructed by setting $D := \mathcal{I}$
(the tunnel set), $C := \Lambda$ (tunnel intensity), and composition
induced by the monoidal structure on tunnels.
\item On morphisms: $F(f^\sharp) := f^\sharp$ (identical frame
homomorphism).
\end{itemize}
\end{proposition}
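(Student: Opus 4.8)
The plan is to discharge the three functorial obligations in turn: well-definedness on objects, well-definedness on morphisms, and preservation of identities and composition. The essential observation driving all three is that $F$ acts as the \emph{identity} on the underlying frame--ultrafilter--Lawvere triple, so the only genuine content is to check that the target data really assemble into a \emph{proliferative} frame--space in the sense of Definition~\ref{def:PFS}, and that the two notions of morphism coincide once the triples are identified.

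First I would establish well-definedness on objects. Given a tunnel frame--space $(\Omega_X, X, d_X)$ built from tunnels $(\mathcal{I}, \Lambda, \bowtie)$, set $D := \mathcal{I}$, $C := \Lambda$, and let $\cdot$ be the partial composition induced by $\bowtie$. The key computation is that for $d = T \in D$ the proliferative neighbourhood
\[
N_\varepsilon(T) = \{T' \in D \mid C(T \cdot T') < \varepsilon\}
= \{T' \in \mathcal{I} \mid \Lambda(T \bowtie T') < \varepsilon\}
= \mathcal{U}_\varepsilon(T)
\]
coincides with the structural neighbourhood of Proposition~\ref{prop:frame-gen}. Hence the smallest frame containing all the $N_\varepsilon(d)$ is the frame generated by the $\mathcal{U}_\varepsilon(T)$, which is exactly $\Omega_X = \Sigma_Y$. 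Consequently $\Phi_Y = \mathrm{Ult}(\Sigma_Y) = \mathrm{Ult}(\Omega_X) = X$, and the induced metric $\rho_Y$ unwinds, under these identifications, to the defining infimum for $d_X$, so $\rho_Y = d_X$. The remaining clause of Definition~\ref{def:PFS}, that the metric topology agrees with the Stone topology, transports verbatim from the corresponding axiom in Definition~\ref{def:TFS}, since both frames, both ultrafilter sets, and both metrics are literally equal.

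Next I would treat morphisms. A morphism $f:(\Omega_X,X,d_X)\to(\Omega_Y,Y,d_Y)$ in $\mathbf{TGeom}$ is a frame homomorphism $f^\sharp:\Omega_Y\to\Omega_X$ whose induced ultrafilter map $f_*$ is non-expansive for $d$. Applying $F$ leaves the frames, the map $f_*$, and the metrics unchanged, so the $\mathbf{PLog}$ non-expansiveness condition becomes $d_Y(f_*\phi,f_*\psi)\le d_X(\phi,\psi)$, which is precisely the inequality already assumed. Thus $F(f^\sharp):=f^\sharp$ is a legitimate $\mathbf{PLog}$-morphism between the image objects, with arrow direction preserved because frame homomorphisms in both categories point contravariantly to the geometric maps. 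Preservation of identities and composition is then immediate: since $F$ is the identity on frame homomorphisms and composition in both categories is composition of the underlying frame homomorphisms in the same direction, $F(\mathrm{id})=\mathrm{id}$ and $F(g\circ f)=F(g)\circ F(f)$ hold on the nose.

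The main obstacle I anticipate is purely bookkeeping at the object level: confirming that the partial composition $\cdot$ inherited by the proliferative base genuinely reproduces $\bowtie$, including the pattern of \emph{partiality} (interference defined only where common substructures exist versus composition defined only where associativity holds), so that the identity $N_\varepsilon(T)=\mathcal{U}_\varepsilon(T)$ is exact rather than merely cofinal. Once that identity is secured and Proposition~\ref{prop:frame-gen} is invoked to pin down the generated frame, everything else is a transcription, and no inequality or limiting argument beyond those already encoded in the source definitions is required.
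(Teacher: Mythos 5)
Your proof is correct and follows essentially the same route as the paper's: both treat $F$ as the identity on the underlying frame--ultrafilter--Lawvere triple and transport the non-expansiveness and functoriality conditions verbatim. You go further than the paper's three-line argument by actually verifying the identity $N_\varepsilon(T)=\mathcal{U}_\varepsilon(T)$ (hence $\Sigma_Y=\Omega_X$ and $\rho_Y=d_X$) and by flagging the real ambiguity over whether the proliferative composition $\cdot$ is the interference $\bowtie$ or the monoidal product on tunnels --- a point the paper silently elides and on which the exactness of your key identity depends.
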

\begin{proof}
The frame structures, ultrafilter spaces, and metrics are identified
directly. Non-expansiveness is preserved since it is stated identically
in both categories. Functoriality follows from the identity action on
morphisms.
\end{proof}
\begin{proposition}[Functor $G$]\label{prop:G}
There exists a functor $G : \mathbf{PLog} \to \mathbf{TGeom}$
defined by: \begin{itemize}
\item On objects: $G(\Sigma_Y, \Phi_Y, \rho_Y) = (\Omega_X,
X, d_X)$ where \[
\Omega_X := \Sigma_Y, \quad X := \Phi_Y, \quad d_X :=
\rho_Y,
\]
and tunnels are identified with distinctions, intensities with costs.
\item On morphisms: $G(g^\sharp) := g^\sharp$ (identical frame
homomorphism).
\end{itemize}
\end{proposition}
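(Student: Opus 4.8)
The plan is to mirror the proof of Proposition~\ref{prop:F}, exploiting the fact that a proliferative frame--space and a tunnel frame--space are instances of the same kind of object---a frame--ultrafilter--Lawvere triple---so that $G$ can act as the identity on underlying data and on the underlying frame homomorphisms. Concretely, there are two things to verify: that $G$ lands in $\mathbf{TGeom}$ on objects, and that it carries $\mathbf{PLog}$-morphisms to $\mathbf{TGeom}$-morphisms. Functoriality then follows formally.

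First I would check well-definedness on objects. Given $(\Sigma_Y, \Phi_Y, \rho_Y)$, put $\Omega_X := \Sigma_Y$, $X := \Phi_Y$, $d_X := \rho_Y$, and verify the three clauses of Definition~\ref{def:TFS}. Clauses (1) and (2) are immediate: a scene frame is a frame, and $\Phi_Y = \mathrm{Ult}(\Sigma_Y) = \mathrm{Ult}(\Omega_X)$ carries the Stone topology by construction. Clause (3)---that $d_X$ is a Lawvere metric whose balls are Stone-open and whose topology coincides with the Stone topology---is exactly the coincidence clause already built into Definition~\ref{def:PFS} for $\rho_Y$. To supply the tunnel data demanded by the ``from tunnels to frames'' construction, I would set $\mathcal{I} := D$, $\Lambda := C$, and $\bowtie := {\cdot}$. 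Under this identification the structural neighbourhood $\mathcal{U}_\varepsilon(T) = \{T' \mid \Lambda(T \bowtie T') < \varepsilon\}$ becomes verbatim the proliferative neighbourhood $N_\varepsilon(d) = \{e \mid C(d \cdot e) < \varepsilon\}$, so by Proposition~\ref{prop:frame-gen} the generated frame is precisely the scene frame $\Sigma_Y$, and the induced metric formula for $d_X$ coincides term-for-term with the defining formula for $\rho_Y$. Hence the output is a genuine tunnel frame--space.

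Next I would check morphisms. A $\mathbf{PLog}$-morphism $g$ is a frame homomorphism $g^\sharp : \Sigma_Z \to \Sigma_Y$ whose induced ultrafilter map $g_*$ is non-expansive (Definition~\ref{def:plogmorph}). Comparing with Definition~\ref{def:tgeommorph}, a $\mathbf{TGeom}$-morphism is a frame homomorphism with non-expansive induced ultrafilter map and the same contravariant direction; the two conditions are stated identically. Since $G$ leaves the frames $\Sigma_Z,\Sigma_Y$, the metrics, and the formula for the induced map fixed, $G(g^\sharp) := g^\sharp$ is automatically a $\mathbf{TGeom}$-morphism. Functoriality---$G(\mathrm{id}) = \mathrm{id}$ and $G(g \circ h) = G(g) \circ G(h)$---is then immediate, because composition in both categories is composition of the underlying frame homomorphisms with matching variance and $G$ is the identity on these.

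The only step requiring genuine attention, rather than bookkeeping, is the object step, specifically Clause (3) together with the frame-generation compatibility: one must confirm that the topology-coincidence guaranteed for $\rho_Y$ really certifies the metric-ball/Stone-open equivalence that Definition~\ref{def:TFS} demands, and that identifying the proliferative base with tunnel data introduces no hidden extra requirement beyond what Proposition~\ref{prop:frame-gen} already tolerates---for instance a mismatch in the domain of definition of the interference versus composition operation. Since both $\bowtie$ and $\cdot$ are partial, and both frames are obtained as the smallest frame containing the respective neighbourhoods, I expect the symmetry to hold exactly; but this is the one place where it could in principle fail, so it deserves to be checked explicitly rather than asserted.
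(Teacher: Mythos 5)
Your proposal is correct and follows essentially the same route as the paper, which simply declares the proof ``symmetric to Proposition~\ref{prop:F}'' and relies on the identity action on frames, ultrafilter spaces, metrics, and morphisms. Your explicit verification of the three clauses of Definition~\ref{def:TFS} and your flag about the partiality of $\bowtie$ versus $\cdot$ are more careful than the paper's one-line argument, but they do not constitute a different method.
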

\begin{proof}
Symmetric to Proposition~\ref{prop:F}.
\end{proof}
\subsection{Strict equivalence}
\begin{theorem}[Categorical equivalence]\label{thm:eq}
The functors $F : \mathbf{TGeom} \to \mathbf{PLog}$ and
$G : \mathbf{PLog} \to \mathbf{TGeom}$ form a strict equivalence
of categories: \[
G \circ F = \mathrm{Id}_{\mathbf{TGeom}}, \qquad
F \circ G = \mathrm{Id}_{\mathbf{PLog}}.
\]
\end{theorem}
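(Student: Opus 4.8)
The plan is to exploit the fact that both $F$ and $G$ are defined to act as the identity on the underlying frame--ultrafilter--Lawvere triple and as the identity on the frame homomorphism that constitutes a morphism. Consequently the two composites $G\circ F$ and $F\circ G$ should turn out to be literal identity functors, not merely naturally isomorphic to the identity. The work therefore reduces to two tasks: first, confirming that each functor is well defined (so that the image of an object really is an object of the target category and the image of a morphism really is a morphism); and second, computing the two composites component by component.

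For well-definedness on objects I would start from the base-level identification, which is where the only genuine content lies. Given a tunnel frame--space presented by a tunnel set $(\mathcal{I}, \Lambda, \bowtie)$, set $D := \mathcal{I}$, $C := \Lambda$, and let composition be $\bowtie$. Unwinding the definition of the proliferative neighbourhood gives
\[
N_\varepsilon(T) = \{T' \in \mathcal{I} \mid C(T \cdot T') < \varepsilon\} = \{T' \in \mathcal{I} \mid \Lambda(T \bowtie T') < \varepsilon\} = \mathcal{U}_\varepsilon(T),
\]
so the scene frame generated by this base is exactly the frame $\Omega_X$ produced by Proposition~\ref{prop:frame-gen}. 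The foci space is then $\mathrm{Ult}(\Omega_X) = X$, and the induced metric $\rho_Y$ reduces to the same infimum of interference intensities that defines $d_X$. Hence $F(\Omega_X, X, d_X) = (\Omega_X, X, d_X)$ is a bona fide proliferative frame--space, and the argument for $G$ is identical with the roles of distinctions and tunnels exchanged. This base identity $\mathcal{U}_\varepsilon = N_\varepsilon$ is precisely the step I expect to be the main obstacle: it is what guarantees that the objects are equal on the nose rather than merely isomorphic, and it must be checked that the two neighbourhood constructions yield the same frame, the same ultrafilter space, and the same metric.

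Well-definedness on morphisms I expect to be immediate: the non-expansiveness condition is stated in identical terms in Definitions~\ref{def:tgeommorph} and~\ref{def:plogmorph}, and under the object identification the ultrafilter map $f_*$ and the metric agree, so a frame homomorphism satisfying the $\mathbf{TGeom}$ condition automatically satisfies the $\mathbf{PLog}$ condition and conversely. Functoriality then follows because acting as the identity on $f^\sharp$ trivially preserves composition and identities of frame homomorphisms, these being the composition and identities of the underlying categories.

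Finally I would assemble the composites. On an object, $(G\circ F)(\Omega_X, X, d_X) = G(\Omega_X, X, d_X) = (\Omega_X, X, d_X)$, and on a morphism $(G\circ F)(f^\sharp) = G(f^\sharp) = f^\sharp$; the symmetric computation gives $F\circ G = \mathrm{Id}_{\mathbf{PLog}}$. Since these are genuine equalities of functors rather than equalities mediated by a natural isomorphism, the conclusion $G\circ F = \mathrm{Id}_{\mathbf{TGeom}}$ and $F\circ G = \mathrm{Id}_{\mathbf{PLog}}$ follows, establishing a strict equivalence. Apart from the object-level base identification noted above, every remaining step is routine bookkeeping.
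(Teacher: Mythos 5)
Your proposal is correct and follows essentially the same route as the paper: both functors act as identities on the underlying frame--ultrafilter--Lawvere triples and on frame homomorphisms, so the composites are literal identity functors. The extra care you take with the base-level identification $N_\varepsilon(T)=\mathcal{U}_\varepsilon(T)$ is welcome detail, but it belongs to the well-definedness of $F$ and $G$ (Propositions~\ref{prop:F} and~\ref{prop:G}), which the paper treats separately before the theorem.
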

\begin{proof}
We verify the equivalence in three steps.
\textbf{Step 1: Objects.}
For any $(\Omega_X, X, d_X) \in \mathbf{TGeom}$, we have
\begin{align*}
(G \circ F)(\Omega_X, X, d_X)
&= G(F(\Omega_X, X, d_X)) \\
&= G(\Omega_X, X, d_X) \quad \text{(by definition of $F$)} \\
&= (\Omega_X, X, d_X) \quad \text{(by definition of $G$)}.
\end{align*}
Similarly, for any $(\Sigma_Y, \Phi_Y, \rho_Y) \in \mathbf{PLog}$,
\[
(F \circ G)(\Sigma_Y, \Phi_Y, \rho_Y) = (\Sigma_Y, \Phi_Y,
\rho_Y).
\]
\textbf{Step 2: Morphisms.}
A frame homomorphism $f^\sharp : \Omega_Y \to \Omega_X$
satisfying non-expansiveness of the induced ultrafilter map for tunnel
metrics remains a frame homomorphism satisfying the same condition
for proliferative metrics, since the metrics and topologies are identified.
Thus both $F$ and $G$ act as identity on morphism sets.
\textbf{Step 3: Functoriality.}
Since both functors act as identity on morphisms, they preserve
composition and identities trivially. Both triangle identities
$G \circ F = \mathrm{Id}$ and $F \circ G = \mathrm{Id}$ hold
strictly, not merely up to natural isomorphism.
\end{proof}
\begin{corollary}
Tunnel Geometry and Proliferation Logic are two presentations of
the same mathematical structure. Every theorem proved in one
framework has a corresponding theorem in the other.
\end{corollary}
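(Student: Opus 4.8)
The plan is to derive the Corollary directly from the strict equivalence of Theorem~\ref{thm:eq}, upgrading it from an informal slogan into a precise transfer principle. First I would observe that since $G\circ F = \mathrm{Id}_{\mathbf{TGeom}}$ and $F\circ G = \mathrm{Id}_{\mathbf{PLog}}$ hold strictly (on the nose, not merely up to natural isomorphism), the functor $F$ is an \emph{isomorphism} of categories with inverse $G$; in particular $\mathbf{TGeom}$ and $\mathbf{PLog}$ are isomorphic, which is the strongest possible sense in which two categories can be ``the same.'' This immediately yields that every categorical construction --- limits, colimits, subobjects, adjunctions, and the spectral operators of Theorem~\ref{thm:spectral} --- is transported by $F$ and reflected by $G$, so no diagrammatic property can distinguish the two frameworks.

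Second, I would make the underlying dictionary fully explicit. By Propositions~\ref{prop:F} and~\ref{prop:G}, on objects both functors act as the identity on the carried data: a tunnel frame--space $(\Omega_X, X, d_X)$ and its image $F(\Omega_X,X,d_X)=(\Sigma_Y,\Phi_Y,\rho_Y)$ satisfy $\Sigma_Y=\Omega_X$, $\Phi_Y=X$, $\rho_Y=d_X$ literally, with the generative data identified via $D=\mathcal{I}$, $C=\Lambda$, and $\cdot = \bowtie$. Under this identification the generating neighbourhoods coincide, $N_\varepsilon(d)=\mathcal{U}_\varepsilon(T)$ whenever $d$ names $T$, so even the frame-generation clause of Proposition~\ref{prop:frame-gen} and the scene-frame definition agree. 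This gives a bijective translation $\tau$ between the primitive symbols of the two theories.

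Third, I would promote $\tau$ to a truth-preserving translation of statements. Writing $\mathcal{L}_T$ and $\mathcal{L}_P$ for the formal languages in which assertions about tunnel and proliferative frame--spaces are phrased, $\tau$ extends by recursion to all formulas, and one shows by induction on formula structure that a statement $\varphi$ holds of a tunnel frame--space iff $\tau(\varphi)$ holds of the corresponding proliferative frame--space: atomic formulas transfer because every primitive relation and operation is identified in the previous paragraph, while connectives and quantifiers transfer because the ranges of quantification (frame elements, points/foci, extended reals, generators) are literally the same sets. Consequently $\varphi$ is provable in Tunnel Geometry exactly when $\tau(\varphi)$ is provable in Proliferation Logic, which is the ``corresponding theorem'' claim.

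The main obstacle, I expect, is precisely this last promotion: the categorical isomorphism alone only guarantees transfer of statements visible at the level of the category $\mathbf{TGeom}$, whereas ``every theorem'' ranges also over assertions about the internal generative apparatus (tunnels, intensities, interference versus distinctions, costs, composition) that are invisible to the frame--space category. Securing the transfer therefore requires the object-level identifications $D=\mathcal{I}$, $C=\Lambda$, $\cdot=\bowtie$ to be total and faithful, and demands care that the two frame-generation clauses really produce the same frame under $\tau$ --- the one genuinely substantive check, since everything downstream is then a routine structural induction.
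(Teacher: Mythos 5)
The paper offers no proof of this corollary at all: it is stated as an immediate gloss on Theorem~\ref{thm:eq}, and your first paragraph --- strict equivalence means $F$ is an isomorphism of categories with inverse $G$, hence all categorical structure transfers --- is exactly the intended justification. Where you genuinely depart from the paper is in recognising that this is not enough to support the corollary as literally stated. ``Every theorem proved in one framework'' ranges over assertions about tunnels, intensities, and interference (respectively distinctions, costs, and composition), and these are \emph{not} part of the data of an object of $\mathbf{TGeom}$ or $\mathbf{PLog}$; the categories only see the triples $(\Omega, X, d)$. Your second and third paragraphs supply the missing ingredient --- an explicit symbol-level dictionary $\tau$ with $D = \mathcal{I}$, $C = \Lambda$, and a matching of composition with interference, extended to a truth-preserving translation by induction on formulas --- and your closing paragraph correctly isolates the one substantive obligation, namely that the two frame-generation clauses produce the same frame under $\tau$. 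This is a strictly more careful argument than the paper's, and it buys a version of the corollary that actually covers theorems about the generative apparatus rather than only diagrammatic properties.

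One caution on the dictionary itself: you set $\cdot = \bowtie$, which is what is forced if $N_\varepsilon(d)$ is to coincide with $\mathcal{U}_\varepsilon(T)$ (both are defined by the cost/intensity of the composite falling below $\varepsilon$). But Proposition~\ref{prop:F} says composition is ``induced by the monoidal structure on tunnels,'' and the proof sketch of Lemma~\ref{lem:refinement-substructure} identifies $\cdot$ with $\otimes$, not with $\bowtie$. The paper is internally ambiguous on this point, and your translation inherits that ambiguity: if $\cdot$ corresponds to $\otimes$ rather than $\bowtie$, the generating neighbourhoods need not coincide and the ``one substantive check'' you flag would fail. You should either stipulate $\cdot = \bowtie$ explicitly as part of the construction of $F$ (and accept the resulting tension with the refinement lemma) or prove that $\Lambda(T \otimes T') = \Lambda(T \bowtie T')$ on the domain where both are defined. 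As it stands this is a gap in the paper that your proposal surfaces rather than one it creates, but a complete write-up must resolve it.
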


\section{Spectral Duality}\label{sec:spectral}
We now show that the equivalence extends to the spectral level,
identifying the tunnel Laplacian with the proliferative Laplacian.
\subsection{Refinement relations}
\begin{definition}[Refinement relation]\label{def:refinement}
For distinctions $d, e \in D$, write $e \preceq d$ if one of the
following holds: \begin{enumerate}
\item There exists $x \in D$ such that $d = e \cdot x$ is defined
and $C(e) \le C(d)$;
\item There exists $x \in D$ such that $d = x \cdot e$ is defined
and $C(e) \le C(d)$.
\end{enumerate}
We say $e$ is an \emph{immediate refinement} of $d$ (written $e
\prec d$) if $e \preceq d$ and there is no $f \in D$ with $e \preceq
f \preceq d$ and $f \neq e, d$.
\end{definition}
\begin{lemma}\label{lem:refinement-substructure}
Under the identification $D \cong \mathcal{I}$, the refinement
relation $\preceq$ corresponds to the substructure relation on tunnels
given by morphisms in the tunnel category.
\end{lemma}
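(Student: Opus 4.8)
The plan is to transport the definition of $\preceq$ across the identification $D \cong \mathcal{I}$ furnished by the functors $F$ and $G$, and then match the resulting relation with the substructure order induced by tunnel morphisms. Under this identification $C$ becomes $\Lambda$ and the partial composition $\cdot$ becomes the corresponding partial operation on tunnels (the interference-based composition $\bowtie$), so writing $T'$ and $T$ for the tunnels associated to $e$ and $d$, the clause $e \preceq d$ reads: there is a tunnel $S$ with either $T = T' \cdot S$ or $T = S \cdot T'$ defined, and $\Lambda(T') \le \Lambda(T)$. I would prove the correspondence as a pair of implications between this factorization-with-cost condition and the existence of a substructure morphism $T' \to T$ in the tunnel category.

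For the forward direction I would argue that any factorization $T = T' \cdot S$ canonically exhibits $T'$ as a substructure of $T$: since $\cdot$ is interference along a shared substructure, the composite $T' \cdot S$ comes equipped with a canonical structural inclusion $T' \hookrightarrow T' \cdot S$, which is exactly a tunnel morphism witnessing $T' $ as a substructure of $T$; the symmetric factorization $T = S \cdot T'$ supplies the inclusion from the other side. The required intensity inequality is then automatic, since composing with a tunnel sharing a nontrivial substructure cannot strictly lower intensity, so $\Lambda(T') \le \Lambda(T' \cdot S) = \Lambda(T)$.

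The backward direction is where I expect the real work to lie. Here I would begin with an arbitrary substructure morphism $\mu : T' \to T$ in the tunnel category and recover a monoidal decomposition realizing it. This requires the structural content of Tunnel Geometry: one must show that every substructure inclusion splits, i.e.\ that the complement of $T'$ inside $T$ exists as a genuine tunnel $S$ with $T = T' \cdot S$ (or $T = S \cdot T'$), so that $\mu$ is identified with the canonical factor inclusion. This splitting is essentially the factorization property guaranteed by Theorem~4.2 in~\cite{TunnelGeomFrame}. The delicate point — and the main obstacle — is that $\bowtie$ is only partially defined, so one must verify that the candidate complement $S$ actually lies in the domain of composition, i.e.\ that $T'$ and $S$ share the common substructure required for $T' \bowtie S$ to be defined and equal to $T$. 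Granting this, monotonicity of $\Lambda$ once more yields $\Lambda(T') \le \Lambda(T)$, closing the equivalence at the level of the preorder $\preceq$.

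Finally, since immediate refinement $\prec$ is defined purely order-theoretically from $\preceq$ as the covering relation (no $f$ strictly between $e$ and $d$), and the covering relation of a poset is preserved by any order isomorphism, the correspondence established for $\preceq$ upgrades automatically to a correspondence between $\prec$ and the immediate-substructure relation on tunnels, completing the proof.
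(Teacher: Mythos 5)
Your overall route is the same as the paper's: identify $\cdot$ with the monoidal/interference structure and $C$ with $\Lambda$, match factorizations $T = T'\cdot S$ with substructure morphisms $T'\to T$, handle the cost clause by monotonicity of $\Lambda$, and transfer the covering relation to get the statement for $\prec$. You are also more honest than the paper's sketch about where the real content sits: the backward direction, in which an abstract substructure morphism must be realized as a factor of a (partially defined) composition. The paper simply asserts this (``$U$ can be composed to yield $T$'') with no argument, so flagging the splitting-plus-domain-of-definition issue is a genuine improvement in bookkeeping, even though neither you nor the paper actually discharges it.

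Two concrete problems with the details you add. First, your claim that the cost inequality is \emph{automatic} --- ``composing with a tunnel sharing a nontrivial substructure cannot strictly lower intensity, so $\Lambda(T')\le\Lambda(T'\cdot S)$'' --- is false in general and is contradicted by the paper's own interval model, where $\bowtie$ is intersection and $\Lambda$ is length: for $T'=[0,2]$ and $S=[1,3]$ one gets $\Lambda(T'\bowtie S)=1<2=\Lambda(T')$. This is precisely why Definition~\ref{def:refinement} imposes the cost condition as a separate clause rather than deriving it; the paper's proof instead \emph{assumes} monotonicity of $\Lambda$ with respect to structural inclusion as a standing hypothesis, and you should do the same rather than claim a derivation. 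Second, your appeal to Theorem~4.2 of~\cite{TunnelGeomFrame} for the splitting of substructure inclusions is a misattribution: in this paper that result is invoked only to guarantee that the metric and Stone topologies coincide, and nothing in the present text supplies a factorization property for tunnel morphisms. As it stands, the backward direction rests on an unproved structural axiom about the tunnel category; naming it as an explicit hypothesis would be the correct fix.
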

\begin{proof}[Proof sketch]
Under $D \cong \mathcal{I}$, the partial composition $\cdot$
corresponds to the monoidal structure $\otimes$ on tunnels. A
morphism $U \to T$ in the tunnel category expresses that $U$ is a
substructure of $T$, which corresponds to $U \preceq T$ since $U$
can be composed to yield $T$.
The cost condition $C(U) \le C(T)$ follows from the monotonicity of
the intensity functor $\Lambda$ with respect to structural inclusion.
\end{proof}
\subsection{Laplacian operators}
\begin{definition}[Tunnel Laplacian]
Let $\mathcal{H}_T = \mathrm{span}_{\mathbb{C}}\{|T\rangle
\mid T \in \mathcal{I}\}$ be the Hilbert space with orthonormal
basis indexed by tunnels.
The \emph{tunnel Laplacian} is defined by:
\[
\widehat{\Delta}_T |T\rangle = \sum_{U \to T} (\Lambda(T) \Lambda(U)) |U\rangle, \]
where the sum ranges over all immediate substructures $U \to T$.
\end{definition}
\begin{definition}[Proliferative Laplacian]
Let $\mathcal{H}_P = \mathrm{span}_{\mathbb{C}}\{|d\rangle
\mid d \in D\}$ be the Hilbert space with orthonormal basis indexed
by distinctions.
The \emph{proliferative Laplacian} is defined by:
\[
\widehat{\Delta}_P |d\rangle = \sum_{e \prec d} (C(d) - C(e))
|e\rangle, \]
where the sum ranges over all immediate refinements $e \prec d$.
\end{definition}
\subsection{Unitary equivalence}
\begin{theorem}[Spectral equivalence]\label{thm:spectral}
Let $\mathcal{H}_T$ and $\mathcal{H}_P$ be the Hilbert spaces
defined above.
Under the identification $D \cong \mathcal{I}$ given by the equivalence
functors $F$ and $G$, define the linear map $U : \mathcal{H}_T
\to \mathcal{H}_P$ by: \[
U|T\rangle = |F(T)\rangle.
\]
Then $U$ extends to a unitary operator satisfying
\[
U \widehat{\Delta}_T U^* = \widehat{\Delta}_P.
\]
\end{theorem}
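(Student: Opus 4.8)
The plan is to prove the two assertions in sequence: first that $U$ extends to a unitary, then that it intertwines the two Laplacians, verifying the operator identity on the orthonormal basis $\{|T\rangle\}_{T\in\mathcal{I}}$. For unitarity, I would use the equivalence of Theorem~\ref{thm:eq}: the functor $F$ identifies the tunnel generating set $\mathcal{I}$ with the distinction set $D$ bijectively (tunnels with distinctions, intensities with costs). Writing $F:\mathcal{I}\to D$ for this induced bijection and $G=F^{-1}$, the map $U$ sends the orthonormal basis $\{|T\rangle\}_{T\in\mathcal{I}}$ of $\mathcal{H}_T$ onto $\{|F(T)\rangle\}_{T\in\mathcal{I}}=\{|d\rangle\}_{d\in D}$, which is exactly the orthonormal basis of $\mathcal{H}_P$. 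A linear map carrying one orthonormal basis bijectively onto another extends uniquely to a unitary isomorphism, with adjoint $U^*=U^{-1}$ given by $U^*|d\rangle=|G(d)\rangle$. This step is routine.

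The substance is the intertwining relation, which I would rewrite as $U\widehat{\Delta}_T = \widehat{\Delta}_P U$ and check on each basis vector $|T\rangle$. Expanding the left-hand side gives
\[
U\widehat{\Delta}_T|T\rangle = \sum_{U' \to T} \bigl(\Lambda(T) - \Lambda(U')\bigr)\, |F(U')\rangle,
\]
while the right-hand side is
\[
\widehat{\Delta}_P U|T\rangle = \widehat{\Delta}_P |F(T)\rangle = \sum_{e \prec F(T)} \bigl(C(F(T)) - C(e)\bigr)\, |e\rangle.
\]
To match these I would invoke Lemma~\ref{lem:refinement-substructure}: under $F$ the immediate-substructure relation $U'\to T$ on tunnels corresponds to the immediate-refinement relation $F(U')\prec F(T)$ on distinctions, so the two index sets are in bijection via $U'\mapsto e=F(U')$. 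The coefficients then coincide because $F$ identifies intensities with costs, $C\circ F=\Lambda$, whence $\Lambda(T)-\Lambda(U')=C(F(T))-C(F(U'))=C(F(T))-C(e)$. Term by term the two sums agree, yielding $U\widehat{\Delta}_T=\widehat{\Delta}_P U$ on the basis, and multiplying on the right by $U^*$ gives $U\widehat{\Delta}_T U^*=\widehat{\Delta}_P$.

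The main obstacle is the passage from Lemma~\ref{lem:refinement-substructure} to the needed bijection of \emph{immediate} relations. The lemma as stated matches $\preceq$ with the substructure order, but the Laplacians sum over the covering relations $\to$ and $\prec$; I would therefore need the correspondence to be a genuine order-isomorphism, so that covers map to covers, rather than merely a relation-preserving map in one direction. This requires checking that $F$ and $G$ preserve $\preceq$ in both directions, which follows since both are induced by the strict equivalence and $C\circ F=\Lambda$ forces the cost-inequality clauses of Definition~\ref{def:refinement} to transport faithfully. A secondary technical point, when $\mathcal{I}$ is infinite, is that $\widehat{\Delta}_T$ and $\widehat{\Delta}_P$ may be unbounded; then $U\widehat{\Delta}_T U^*=\widehat{\Delta}_P$ should be read as holding on the common dense core $\mathrm{span}\{|T\rangle\}$, with $U$ mapping this core onto $\mathrm{span}\{|d\rangle\}$, so the operators are unitarily equivalent on their natural domains. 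I would also flag that the intertwining succeeds precisely when the tunnel-Laplacian coefficient is read as the difference $\Lambda(T)-\Lambda(U')$, matching $C(d)-C(e)$.
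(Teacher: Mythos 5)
Your proposal follows essentially the same route as the paper's own proof: unitarity from the bijection of orthonormal bases, then a term-by-term identification of the Laplacian coefficients via Lemma~\ref{lem:refinement-substructure} and the identification $C \circ F = \Lambda$; your check of $U\widehat{\Delta}_T = \widehat{\Delta}_P U$ on basis vectors is just the matrix-element comparison of the paper's Step~2 in different notation. Your two flagged caveats --- that Lemma~\ref{lem:refinement-substructure} only addresses $\preceq$ and must be upgraded to an order-isomorphism before covers transport to covers, and that the conjugation identity should be read on the dense core $\mathrm{span}\{|T\rangle\}$ when the operators are unbounded --- are both real gaps that the paper's proof silently elides, so your version is, if anything, the more careful one.
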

\begin{proof}
\textbf{Step 1: $U$ is unitary.}
The map $U$ is defined by a bijection of orthonormal bases, hence
extends to a unitary operator.

\textbf{Step 2: Matrix elements coincide.}
By Lemma~\ref{lem:refinement-substructure}, immediate refinements $e \prec d$ correspond bijectively to immediate substructures
$U \to T$. Moreover,
$C(d) = \Lambda(T)$ and $C(e) = \Lambda(U)$ under the identification.
Therefore,
\begin{align*}
\langle d'| \widehat{\Delta}_P |d\rangle
&= \sum_{e \prec d} (C(d) - C(e)) \delta_{d', e} \\
&= \sum_{U \to T} (\Lambda(T) - \Lambda(U)) \delta_{T', U}
\\ &= \langle T'| \widehat{\Delta}_T |T\rangle,
\end{align*}
where $T' = G(d')$, $T = G(d)$.
\textbf{Step 3: Conjugation formula.}
Since matrix elements agree in the respective bases, we have \[
\langle \phi| U \widehat{\Delta}_T U^* |\psi\rangle
= \langle U^* \phi| \widehat{\Delta}_T |U^* \psi\rangle
= \langle \phi| \widehat{\Delta}_P |\psi\rangle
\]
for all $\phi, \psi \in \mathcal{H}_P$, proving the operator equation.
\end{proof}
\begin{corollary}
The tunnel Laplacian and proliferative Laplacian have identical spectra, identical eigenvectors (under the identification $U$), and identical
functional calculus. In particular, spectral theorems, heat kernel
asymptotics, and semiclassical expansions proven for one operator
apply equally to the other.
\end{corollary}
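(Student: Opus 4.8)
The plan is to obtain the corollary as a purely formal consequence of the unitary intertwining $U\widehat{\Delta}_T U^* = \widehat{\Delta}_P$ supplied by Theorem~\ref{thm:spectral}, exploiting that conjugation by a unitary is an isomorphism of operator algebras and hence preserves every spectral invariant. First I would treat the spectra: for each $\lambda\in\mathbb{C}$ the identity $\lambda I-\widehat{\Delta}_P = U(\lambda I-\widehat{\Delta}_T)U^*$ shows that the left-hand operator is invertible exactly when $\lambda I-\widehat{\Delta}_T$ is, so $\sigma(\widehat{\Delta}_P)=\sigma(\widehat{\Delta}_T)$; this step needs no hypothesis beyond $U$ being invertible.

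Next I would transport eigenvectors. If $\widehat{\Delta}_T v=\lambda v$ then $\widehat{\Delta}_P(Uv)=U\widehat{\Delta}_T U^* U v = U\widehat{\Delta}_T v = \lambda\,Uv$, and running the same computation with $U^*$ shows that $v\mapsto Uv$ is a bijection of each eigenspace of $\widehat{\Delta}_T$ onto the corresponding eigenspace of $\widehat{\Delta}_P$, preserving dimensions and orthogonality. The functional-calculus clause then reduces to the intertwining $f(\widehat{\Delta}_P)=U f(\widehat{\Delta}_T)U^*$, which I would establish first for polynomials via $(U\widehat{\Delta}_T U^*)^n=U\widehat{\Delta}_T^{\,n}U^*$ and then extend to the admissible function class by approximation in the relevant operator topology. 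The asymptotic statements follow immediately: since $e^{-t\widehat{\Delta}_P}=U e^{-t\widehat{\Delta}_T}U^*$, the two heat semigroups are unitarily conjugate, so their traces, zeta functions, and every associated asymptotic coefficient coincide, these quantities being manifestly conjugation-invariant.

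The hard part will not be the algebra of conjugation but the analytic setup required for the notions of \emph{eigenvector}, \emph{functional calculus}, and \emph{heat kernel} to carry their intended meaning. By construction both $\widehat{\Delta}_T$ and $\widehat{\Delta}_P$ send a basis vector to a combination of strictly lower-cost basis vectors, so relative to the refinement order $\preceq$ they are strictly triangular and therefore not self-adjoint; the spectral theorem does not apply and one cannot invoke the Borel functional calculus directly. The genuine obstacle is thus to fix a calculus under which the transport is legitimate---the holomorphic (Riesz--Dunford) calculus for a bounded operator, or a prior self-adjoint symmetrization of $\widehat{\Delta}_T$---and, in the infinite-dimensional case, to guarantee that the operators are at least bounded or densely defined and closable. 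I would secure the latter by assuming the refinement relation is locally finite and the intensities $\Lambda$ are uniformly bounded across each refinement layer, which makes $\widehat{\Delta}_T$ bounded; once a fixed calculus and boundedness are in place, each clause of the corollary becomes a one-line transport along $U$.
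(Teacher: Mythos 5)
Your formal transport argument is exactly the (implicit) content of the paper: the corollary is stated with no proof at all, as an immediate consequence of the conjugation identity $U\widehat{\Delta}_T U^* = \widehat{\Delta}_P$ from Theorem~\ref{thm:spectral}, and your resolvent, eigenvector, polynomial-calculus, and heat-semigroup computations are the standard one-line verifications the authors evidently had in mind. Where you genuinely go beyond the paper is in the final paragraph, and the point you raise there is substantive: as defined, both Laplacians map each basis vector $|d\rangle$ into the span of its \emph{immediate refinements} $e \prec d$, so they are strictly triangular with respect to the refinement order, hence nilpotent-like and manifestly not self-adjoint. The paper nowhere symmetrizes them or restricts to a setting where they are bounded or closable, yet the corollary invokes ``spectral theorems'' and ``heat kernel asymptotics,'' which presuppose at least a self-adjoint (or sectorial) operator. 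Your proposed remedies --- local finiteness of the refinement relation and uniform bounds on $\Lambda$ across refinement layers to secure boundedness, together with a choice of holomorphic calculus or a prior symmetrization --- are precisely the hypotheses the paper would need to add for the corollary's stronger clauses to be meaningful. So your proof is correct on the algebra, coincides with the paper's intended route, and additionally exposes a gap in the paper's own setup rather than in your argument; if anything, you should state the boundedness/symmetrization assumption as an explicit hypothesis of the corollary rather than leaving it as a remark.
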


\section{Examples and Models}\label{sec:examples}
\begin{example}[Graph model]
Let $G = (V, E)$ be a weighted graph. Define tunnels as edges $e
\in E$
with intensity $\Lambda(e) = w(e)$ (edge weight). Interference
$e_1 \bowtie e_2$ is defined if $e_1$ and $e_2$ share a vertex,
with intensity equal to the
sum of weights of common incident edges.
The emergent points $X$ correspond to vertices $V$ (or more precisely,
to certain consistent families of edge neighbourhoods). The Lawvere metric recovers shortest-path distance. The tunnel Laplacian
coincides with the graph Laplacian.
In the proliferative interpretation, distinctions are edges, cost is
weight, and composition is path concatenation.
\end{example}
\begin{example}[Interval domain model]
Let $\mathcal{I}$ be the set of closed intervals $[a, b] \subseteq
\mathbb{R}$.
Inteensity is interval length: $\Lambda([a,b]) = b - a$. Interference
$[a,b] \bowtie [c,d]$ is defined as $[a,b] \cap [c,d]$ if nonempty, with
intensity equal to the length of the intersection.
Emergent points correspond to real numbers (or maximal consistent
families of nested intervals). The Lawvere metric recovers Euclidean
distance.
In proliferation logic, distinctions are intervals, cost is length, and
composition is intersection or concatenation (depending on interpretation).
\end{example}
\begin{example}[Locale model]
Let $\Omega$ be a spatial frame (locale). Regular elements $a \in
\Omega$ serve as tunnels. Intensity $\Lambda(a) = -\log \mu(a)$
where $\mu$ is a measure on the locale. Interference $a \bowtie b
= a \wedge b$ (meet in frame).
The emergent space $X = \mathrm{pt}(\Omega)$ recovers the space
of points of the locale. This construction provides a metric refinement
of classical locale theory.
\end{example}

\section{Conclusion}\label{sec:conclusion}
We have established a strict categorical equivalence between Tunnel
Geometry and Proliferation Logic. Both frameworks express the same
mathematical structure: frame--ultrafilter--Lawvere triples equipped
with compatible
primitive structures (tunnels or distinctions). The equivalence clarifies
the ontology of static worlds, demonstrates how point-free geometry and generative semantics are dual aspects of a single formalism,
and provides a dissolution of Zeno's paradoxes through structural
interference.
The spectral equivalence (Theorem~\ref{thm:spectral}) shows that
the
dynamical content of both theories — encoded in their respective
Laplacians — is identical, suggesting that physical or computational
interpretations
will be invariant under the tunnel/proliferation duality.
\subsection{Further directions}
Several extensions merit investigation:
\begin{enumerate}
\item \textbf{Physical interpretation:} Does the equivalence
extend to models of spacetime, with tunnels as geodesic structures
and distinctions as causal relations? Can the spectral theory
provide a framework for quantum gravity or emergent spacetime?
\item \textbf{Quantum structures:} Can the spectral equivalence
illuminate connections between geometric and logical approaches to
quantum foundations? The Hilbert space structure and
unitary equivalence suggest natural quantum interpretations.
\item \textbf{Computational semantics:} Do proliferative framespaces provide natural denotational semantics for constructive type
theories?
The generative structure of distinctions parallels proof construction.
\item \textbf{Higher categories:} Can the equivalence be lifted to a 2categorical or $\infty$-categorical setting, where morphisms between
frame homomorphisms carry additional structure?
\item \textbf{Measure and integration:} Can the Lawvere metric
structure support a natural integration theory? The intensity/cost
functions
suggest measurable structures.
\end{enumerate}

\section*{Acknowledgements}
The author thanks colleagues and anonymous reviewers for their
helpful comments on earlier versions of this work.


\end{document}